\title[Logarithmic good reduction and the index]{Logarithmic good reduction and the index}
\author{Kentaro Mitsui}
\address{Department of Mathematics, Graduate School of Science, Kobe University, Hyogo 657--8501, Japan}
\email{mitsui@math.kobe-u.ac.jp}
\author{Arne Smeets}
\address{Radboud Universiteit Nijmegen, IMAPP, Heyendaalseweg 135, 6525 AJ Nijmegen, The Netherlands \emph{and} University of Leuven, Departement Wiskunde, Celestijnenlaan 200B, 3001 Heverlee, Belgium} 
\thanks{The first named author gratefully acknowledges financial support of JSPS and CNRS. The second named author thanks FWO Vlaanderen for financial support and the Max Planck Institute for Mathematics for its hospitality. We also acknowledge support by the European Research Council. The details are given at the end.}
\email{arnesmeets@gmail.com}
\newcommand{\pp}{p} 
\newcommand{\tp}{{p'}} 
\begin{document}
\maketitle

\begin{abstract} Let $K$ be the fraction field of a complete discrete valuation ring, with algebraically closed residue field of characteristic $p > 0$. This paper studies the index of a smooth, proper $K$-variety $X$ with logarithmic good reduction. We prove that it is prime to $p$ in `most' cases, for example if the Euler number of $X$ does not vanish, but (perhaps surprisingly) not always. We also fully characterise curves of genus $1$ with logarithmic good reduction, thereby completing classical results of T. Saito and Stix  valid for curves of genus at least $2$.
\end{abstract}

\section{Introduction}

\subsection{Context} Let $K$ be the fraction field of a discrete valuation ring, with perfect residue field $k$ of characteristic $p \geq 0$. Fix a separable closure $K^s$ of $K$, and denote by $K^t$ the maximal tamely ramified extension of $K$ contained in $K^s$. Let $X$ be a smooth, proper $K$-variety (all $K$-varieties are assumed to be geometrically integral over $K$).

The \emph{index} $\iota(X)$ is the smallest positive degree of a zero-cycle on $X$, or equivalently, the greatest common divisor of the degrees of all finite field extensions $L/K$ for which $X(L) \neq \emptyset$. This is an important arithmetic invariant of $X$, which has been well studied; let us mention three interesting and recent contributions to the topic, in chronological order. Denote by $S$ the spectrum of the valuation ring of $K$.

Gabber--Liu--Lorenzini show in \cite{GLL} how, over arbitrary Henselian discretely valued fields, $\iota(X)$ can be computed from the special fibre of a proper, regular model of $X$ over $S$. Their approach is based on intersection theory and moving lemmata. 

Esnault--Levine--Wittenberg study the index in \cite{ELW} using Euler characteristics of coherent sheaves on $X$. Among other results, they prove the statement that if $X$ is a $K$-variety which is rationally connected and if moreover $p = 0$ or $p > \dim X + 1$, then $\iota(X) = 1$.

In \cite{KN}, Kesteloot--Nicaise introduce the \emph{specialisation index} $\iota_\mathrm{sp}(X)$, an invariant which refines the index. They explain how to compute it starting from a log regular model of $X$ over $S$. Their work suggests the following natural question (also mentioned in \cite[Introduction]{Sm}).

\subsection{Question} \label{sec:question} Assume that $p > 0$. It is well-known that every positive integer prime to $p$ appears as the index of some smooth, proper $K$-variety $X$ with \emph{logarithmic good reduction}, i.e.\ with a proper, log smooth model of $X$ over $S$. Conversely, if $X$ is a smooth, proper $K$-variety with logarithmic good reduction, is $\iota(X)$ necessarily prime to $p$?
This is equivalent to asking whether $X(K^t) \neq \emptyset$ if $X$ has logarithmic good reduction. 

It may seem natural to expect an affirmative answer to this question, by analogy with Hensel's lemma for smooth families in the classical sense. This note will 
show that this expectation is too na\"ive: the answer is affirmative in ``most'' cases, but not always. 

We should mention that the existence of quasi-sections in logarithmic geometry has been studied in a more general setting by Nakayama in \cite{Na2}; in this note, we sacrifice generality in order to obtain precise answers to the question in a concrete, geometric setting.

\subsection{Results} We obtain two main results: a positive statement which is rather general, and some specific examples showing that this positive statement is essentially optimal.
 
Our first main result (proven in \S \ref{section3}) gives sufficient conditions for the existence of a $K^t$-point. Denote by $\chi(X)$ the $\ell$-adic Euler characteristic $\sum_{i \geq 0} (-1)^i \dim_{\mathbf{Q}_\ell} H^i(X \times_K K^s, \mathbf{Q}_\ell)$, where $\ell \neq p$ is prime; this is an integer independent of the choice of $\ell$.

\begin{theo} \label{maintheorem} Let $X$ be a smooth, proper $K$-variety with logarithmic good reduction. Assume that $\chi(X) \neq 0$. Then $X(K^t) \neq \emptyset$, i.e.\ $\iota(X)$ is prime to $p$. \end{theo}

The topological assumption on the ($\ell$-adic) Euler number may seem a bit strange, but it turns out to be necessary since Theorem \ref{maintheorem} breaks down when $\chi(X) = 0$, even in the case of curves. Indeed, our second main result (proven in \S \ref{section5}) yields a full characterisation of curves of genus $1$ with logarithmic good reduction, completing the results obtained previously by T. Saito and Stix for curves of genus at least $2$ in \cite{Sa, Sa2, St}. 

To state our result, recall that the \emph{period} of a curve $C$ of genus $1$ over $K$ is defined as the order of the class of $C$ in $H^1(K,J)$, where $J$ is the Jacobian of $C$. Since we assume the residue field $k$ to be algebraically closed, the Brauer group of $K$ vanishes, and hence, the period is actually equal to the index $\iota(C)$ by a result of Lichtenbaum \cite[Theorem 1]{Lichtenbaum}.
\begin{theo} \label{genus1theorem} Let $C$ be a curve of genus $1$ over $K$. Denote by $m$ its period and by $\mathcal{C}$ its minimal regular model over $S$. The curve $C$ has logarithmic good reduction if and only if both of the following conditions are satisfied: \begin{itemize} \item[(a)] the Galois action on $H^1(C \times_K K^s, \mathbf{Q}_\ell)$ is tamely ramified; \item[(b)] if $p \mid m$, then the Jacobian of $C$ has good reduction, and $\mathcal{C}$ is cohomologically flat over $S$. \end{itemize}
\end{theo}

Recall that the (by now classical) results of T. Saito and Stix say that a curve $C$ of genus at least $2$ over $K$ has logarithmic good reduction if and only if the Galois action on $H^1(C \times_K K^s, \mathbf{Q}_\ell)$ is tamely ramified. The same result is known to hold for elliptic curves, but condition (b) in our theorem shows that the situation is more delicate for curves of genus $1$ which do not have a rational point. The whole crux of Theorem \ref{genus1theorem} is of course that there really exist curves $C$ of genus $1$ over $K$ satisfying both conditions (a) and (b) and for which $C(K^t) = \emptyset$ (or equivalently, $p \mid m$): this follows from \cite[9.4.1.(iii)]{Ra}. For the construction of such examples, the interested reader can consult the work of Katsura--Ueno \cite{KU} and Harbourne--Lang \cite{HL} on so-called \emph{tame} and \emph{wild} fibres of elliptic surfaces in positive characteristic; the construction in \cite[Example 4.9]{KU} is particularly elegant.

\subsection{Structure of the paper} In \S \ref{section2}, we will give a simple geometric description, using differential forms, for the log smooth locus of a log regular scheme over a discrete valuation ring. This description will subsequently be used in \S \ref{section3} to prove that the geometry of a log smooth degeneration without any tamely ramified quasi-sections must be strongly restricted. Based on these considerations, we then prove Theorem \ref{maintheorem}. 

In \S \ref{section4}, we define an invariant which, still in the  spirit of the criterion from \S \ref{section2}, allows to measure the ``defect'' of log smoothness in the absence of tamely ramified quasi-sections. This proves useful in \S \ref{section5} for the trickier cases of the study of curves of genus $1$, that is, for the study of torsors under elliptic curves with good reduction: it allows us to separate those torsors with logarithmic good reduction from those without.

\subsection{Notation and conventions} For generalities on logarithmic geometry, we refer to the foundational papers by Kazuya Kato, or to \cite[\S 2]{Sm} for a short summary of the notions needed in this paper.

Given the spectrum $S$ of a discrete valuation ring and a flat $S$-scheme $\mathcal{X}$, we denote by $\mathcal{X}^\dag$ the log scheme obtained by equipping $\mathcal{X}$ with the natural log structure induced by the special fibre $\mathcal{X}_s$. If $U$ denotes the generic fibre of $\mathcal{X} \to S$, and if $j: U \to \mathcal{X}$ denotes the corresponding open immersion, then this log structure is given by the inclusion $\mathcal{M}_X := \mathcal{O}_X \cap j_*\mathcal{O}_U^\times \hookrightarrow \mathcal{O}_X$. A scheme $X$ smooth and proper over the generic point of $S$ is said to have \emph{logarithmic good reduction} if there exists a proper model $\mathcal{X}$ of $X$ over $S$ with the property that $\mathcal{X}^\dag$ is log smooth over $S^\dag$.

Given a monoid $P$, we denote by $P^\mathrm{gp}$ its group envelope, by $P^\times$ the subgroup of invertible elements, and by $P^\sharp$ the associated sharp monoid $P/P^\times$.

\section{The log smooth locus of a log regular model} \label{section2}

Let $R$ be a discrete valuation ring, with fraction field $K$ and perfect residue field $k$ of characteristic $p \geq 0$.  Let $S = \mathrm{Spec}\,R$, and let $\mathcal{X}$ be a flat $S$-scheme of finite type, with smooth geometrically integral generic fibre and special fibre $\mathcal{X}_s$. Assume that $\mathcal{X}^\dag$ is log regular. The goal of this section is to understand the log smooth locus of the induced morphism $\mathcal{X}^\dag \to S^\dag$ of log schemes.

Denote by $F(\mathcal{X}^\dag)$ the Kato fan associated with $\mathcal{X}^\dag$. There is a continuous map of monoidal spaces $\Pi\colon\mathcal{X}^\dag \to F(\mathcal{X}^\dag)$, which determines a standard stratification of $\mathcal{X}$ into finitely many locally closed subsets. Given $\mathfrak{p} \in F(\mathcal{X}^\dag)$, denote by $U_\mathfrak{p} = \Pi^{-1}(\{\mathfrak{p}\})$ the corresponding locally closed subset of $\mathcal{X}$, equipped with the reduced subscheme structure. Each locally closed stratum $U_\mathfrak{p}$ is smooth over $k$, and its Zariski closure $V_\mathfrak{p}$ (again with the reduced structure) is normal.

We denote by $\mathcal{M}_\mathcal{X} \hookrightarrow \mathcal{O}_{\mathcal{X}}$ the log structure on $\mathcal{X}^\dag$. Take $\mathfrak{p} \in F(\mathcal{X}^\dag)$. If $\mathfrak{p}$ is the generic point of $F(\mathcal{X}^\dag)$, we simply set $m^\sharp_\mathfrak{p} = 1$. Otherwise, we denote by $m^\sharp_\mathfrak{p}$ the largest positive integer such that the image of $1$ under the map $\mathbf{N} \to \mathcal{M}_{\mathcal{X},\mathfrak{p}}^\sharp$ associated with $\mathcal{X}^\dag \to S^\dag$ is divisible by $m^\sharp_\mathfrak{p}$.

\begin{defi} \label{defi:locus} Define $F_\pp(\mathcal{X}^\dag)$ (resp.\ $F_\tp(\mathcal{X}^\dag)$) to be the subset of $F(\mathcal{X}^\dag)$ consisting of those $\mathfrak{p}$ such that $m^\sharp_\mathfrak{p}$ is divisible (resp.\ is not divisible) by $p$. Let $$\mathcal{X}_\pp = \Pi^{-1}(F_\pp(\mathcal{X}^\dag)) \ \text{ and } \ \mathcal{X}_\tp = \Pi^{-1}(F_\tp(\mathcal{X}^\dag))\rlap{.}$$ We refer to $\mathcal{X}_\pp$ as the \emph{$p$-locus} of $\mathcal{X}$, and to $\mathcal{X}_\tp$ as the \emph{$p'$-locus} of $\mathcal{X}$. Put $m_{\mathcal{X}^\dag} = \gcd_{\mathfrak{p}}(m^\sharp_{\mathfrak{p}})$, where $\mathfrak{p}$ ranges over all elements of $F(\mathcal{X}^\dag)$ different from the generic point. \end{defi}

\begin{rema} The $p'$-locus $\mathcal{X}_\tp$ contains the generic fibre of $\mathcal{X} \to S$, and therefore, $\mathcal{X}_\pp$ is contained in the special fibre. Assume that $\mathfrak{p}$ and $\mathfrak{q}$ are points on $F(\mathcal{X}^\dag)$, not equal to the generic point, such that $\mathfrak{p}$ specialises to $\mathfrak{q}$. If $\mathfrak{q} \in F_\pp(\mathcal{X}^\dag)$, then also $\mathfrak{p} \in F_\pp(\mathcal{X}^\dag)$. This means precisely that the $p'$-locus $\mathcal{X}_\tp$ cuts out a closed subset of the special fibre. \end{rema}

As we will see shortly, $\mathcal{X}^\dag \to S^\dag$ is log smooth at all points of $\mathcal{X}_\tp$, but for points of $\mathcal{X}_\pp$, the situation is less clear; our goal is to pin down the subset of $\mathcal{X}_\pp$ where $\mathcal{X}^\dag \to S^\dag$ is log smooth using a criterion involving differential forms. This generalises (in several ways) an idea used in \cite[\S 5]{Sm}.

\begin{prop} \label{prop:omega} Let $\mathcal{X}$ be as above. Put $m = m_{\mathcal{X}^\dag}$. \begin{itemize} \item[(1)] For any $x\in\mathcal{X}_s$, there exist an affine neighbourhood $\mathrm{Spec}\,A$ of $x$, $u\in A^\times$ and $f\in A$ such that $uf^m$ is equal to the pullback of a uniformiser of $R$. \item[(2)] We denote by $\overline{\mathcal{X}}$ the closed subscheme of $\mathcal{X}$ defined by $m$. Then there exists a unique $\omega_{\overline{\mathcal{X}}/S}\in H^0(\overline{\mathcal{X}},\Omega^1_{\overline{\mathcal{X}}/S})$ such that, for any $x\in\mathcal{X}_s$, $W=\mathrm{Spec}\,A$, $u\in A^\times$ and $f\in A$ as in (1), the equality $\omega_{\overline{\mathcal{X}}/S}|_{\overline{\mathcal{X}}\cap W}=\mathrm{dlog}\,\overline{u}$ holds, where $\overline{u}$ is the pullback of $u$ under the closed immersion $\overline{\mathcal{X}}\cap W\to W$. If $R$ is of positive characteristic, then $\overline{\mathcal{X}}=\mathcal{X}$ and $\omega_{\overline{\mathcal{X}}/S}|_X=0$. \end{itemize} \end{prop}

\begin{proof} Let $x \in \mathcal{X}_s$. Choose a uniformiser $\pi$ of $R$. The map $\mathbf{N} \to R$ given by $1 \mapsto \pi$ yields a chart for the log structure on $S^\dag$. Consider a chart for $\mathcal{X}^\dag \to S^\dag$ around $x$, consisting of an affine neighbourhood $W = \mathrm{Spec}\,A$ of $x$, an fs monoid $P$ and homomorphisms $v\colon \mathbf{N} \to P$ and $\varphi\colon P \to A$ for which $\varphi(v(1)) = \pi$.

Let $\mathfrak{p} \in F(\mathcal{X}^\dag)$ such that $x \in U_\mathfrak{p}$. The inverse image under $\varphi$ of the prime ideal of $A$ which defines $x$ is a prime ideal $\mathfrak{p}'$ of $P$. The complement $P \setminus \mathfrak{p}'$ is a face, so $P/(P \setminus \mathfrak{p}')$ is a sharp fs monoid, which implies that the image of $1$ under $\mathbf{N} \to P \to P/(P \setminus \mathfrak{p}')$ is divisible by $m_{\mathfrak{p}}^\sharp$. Thus, there exists an element $c \in P \setminus \mathfrak{p}'$ such that $v(1) - c$ is divisible by $m$ in $P$. Therefore, $v(1) - c = ma$ for some $a \in P$. Put $u = \varphi(c)$ and $f = \varphi(a)$. Then $u\in A\cap\mathcal{O}_{\mathcal{X},x}^\times$ and $\pi = uf^m$.

Shrinking $W$, we obtain $W=\mathrm{Spec}\,A$, $u\in A^\times$ and $f\in A$ as described in (1). Let us show (2). Take any other $W'=\mathrm{Spec}\,A'$, $u'\in(A')^\times$ and $f'\in A'$ as in (1). We denote by $r\in(R/(m))^\times$ the image of $uf^m(u'(f')^m)^{-1}\in R^\times$ under the quotient homomorphism $R\to R/(m)$. Put $w=\overline{f^{-1}f'}\in H^0(\overline{\mathcal{X}}\cap W\cap W',\mathcal{O}_{\overline{\mathcal{X}}}^\times)$. Then $\mathrm{dlog}\,\overline{u}-\mathrm{dlog}\,\overline{u'}=\mathrm{dlog}\,r+\mathrm{dlog}\,w^m=m\,\mathrm{dlog}\,w=0$ on $\overline{\mathcal{X}}\cap W\cap W'$. If $R$ is of positive characteristic, then $\mathrm{dlog}\,u=\mathrm{dlog}\,\pi=0$ on $X\cap W$, which concludes the proof of (2). \end{proof}

\begin{defi} \label{defi:omega} Let $\mathcal{X}$ be as above. We denote by $\mathcal{U}$ the open subscheme $\mathcal{X}\setminus(\mathcal{X}_\tp\cap\mathcal{X}_s)$ of $\mathcal{X}$ and by $\overline{\mathcal{U}}$ the closed subscheme of $\mathcal{U}$ defined by $m_{\mathcal{X}^\dag\cap\mathcal{U}}$ (Definition \ref{defi:locus}). Take $\omega_{\overline{\mathcal{U}}/S}\in H^0(\overline{\mathcal{U}},\Omega^1_{\overline{\mathcal{U}}/S})$ given by Proposition \ref{prop:omega}.(2). Let $U$ be a subscheme of $\overline{\mathcal{U}}$ and $\mathcal{X}_s$. We denote by $$\begin{tikzcd}\lambda_U\colon H^0(\overline{\mathcal{U}},\Omega^1_{\overline{\mathcal{U}}/S})\ar[r]&H^0(U,\Omega^1_{U/S})\cong H^0(U,\Omega^1_{U/k})\end{tikzcd}$$ the composite of the homomorphism induced by the pullback under the immersion $U\to\overline{\mathcal{U}}$ and the isomorphism induced by the reduction homomorphism $R\to k$. Put $\omega_U=\lambda_U(\omega_{\overline{\mathcal{U}}/S})$. For each $\mathfrak{p} \in F_\pp(\mathcal{X}^\dag)$, put $\omega_\mathfrak{p}=\omega_{U_\mathfrak{p}}\in H^0(U_{\mathfrak{p}},\Omega^1_{U_{\mathfrak{p}}/k})$. \end{defi}

\begin{prop} \label{prop:logsmoothlocus} Let $\mathcal{X}$ be as above, and let $x \in \mathcal{X}$. \begin{itemize} \item[(1)] Assume that $x \in \mathcal{X}_\tp$. Then the morphism $\mathcal{X}^\dag \to S^\dag$ is log smooth at $x$. \item[(2)]  Assume that $x \in \mathcal{X}_\pp$. Let $\mathfrak{p} \in F_\pp(\mathcal{X}^\dag)$ such that $x \in U_\mathfrak{p}$. Then the morphism $\mathcal{X}^\dag \to S^\dag$ is log smooth at $x$ if and only if $\omega_\mathfrak{p}$ does not vanish at $x$. \end{itemize} \end{prop}

\begin{proof} We use the same notation as in the proof of Proposition \ref{prop:omega}. If $\mathfrak{p} \in F_\tp(\mathcal{X}^\dag)$, the cokernel of $v^\mathrm{gp}\colon\mathbf{N}^\mathrm{gp} \to P^\mathrm{gp}$ does not have $p$-torsion, simply because the cokernel of the composite $\mathbf{N}^\mathrm{gp} \to P^\mathrm{gp}/(P \setminus \mathfrak{p}')^\mathrm{gp}$ does not have any. Hence, Kato's criterion implies that the morphism $\mathcal{X}^\dag \to S^\dag$ is log smooth at $x$. This settles (1).

Let us prove (2). A slight refinement of Kato's criterion (see \cite[Theorem 12.3.37, Corollary 12.3.42]{GR}) says that the morphism $\mathcal{X}^\dag \to S^\dag$ is log smooth at $x$ if and only if one can choose, \'etale locally around $x$, a chart for $\mathcal{X}^\dag \to S^\dag$, consisting of an affine neighbourhood $W = \mathrm{Spec}\,A$, a toric monoid $P$ and morphisms $v\colon \mathbf{N} \to P$ and $\varphi\colon P \to A$ for which $\varphi(v(1)) = \pi$, such that the induced map $$\begin{tikzcd}\Phi\colon W \ar[r]& \mathrm{Spec}\,R[P]/(\pi - v(1))\end{tikzcd}$$ is \'etale, and the cokernel of $v^\mathrm{gp}\colon \mathbf{N}^\mathrm{gp} \to P^\mathrm{gp}$ does not have $p$-torsion.

Let us show the only if part. Assume that the morphism $\mathcal{X}^\dag \to S^\dag$ is log smooth at $x$. We choose the above chart and take $\mathfrak{p}'$ and $c \in P \setminus \mathfrak{p}'$ as in the proof of Proposition \ref{prop:omega}.(1), where $p$ divides $v(1) - c$. The ideal defining $V_\mathfrak{p}$ is $(\mathfrak{p}')$, the ideal generated by $\{\varphi(b) : b \in \mathfrak{p}'\}$. Shrinking $W$, we may assume that $V_\mathfrak{p} \cap W = U_\mathfrak{p} \cap W$, which is smooth over $k$. Since $\Phi$ is \'etale, so is the base change $$\begin{tikzcd}U_\mathfrak{p} \cap W \ar[r]& \mathrm{Spec}\,k[P \setminus \mathfrak{p}']\rlap{.}\end{tikzcd}$$ It follows that in this case, the $1$-forms on $U_{\mathfrak{p}} \cap W$ are given by $$\Omega^1_{U_\mathfrak{p}/k}(U_\mathfrak{p} \cap W) \cong A/(\mathfrak{p}') \otimes_{\mathbf{Z}} (P \setminus \mathfrak{p}')^\mathrm{gp}\rlap{,}$$ and the restriction of $\omega_\mathfrak{p}$ to $U_\mathfrak{p} \cap W$ corresponds to $\overline{1} \otimes c$. Since $\mathrm{coker}\,v^\mathrm{gp}$ does not have $p$-torsion, and $p$ divides $v(1) - c$, the element $c$ is not divisible by $p$ in $(P \setminus \mathfrak{p}')^\mathrm{gp}$. Thus, we have $$\overline{1} \otimes c \ne 0\ \text{ in }\ \kappa(x) \otimes_{\mathbf{Z}} (P \setminus \mathfrak{p}')^\mathrm{gp}\rlap{,}$$ where $\kappa(x)$ is the residue field of the local ring $\mathcal{O}_{\mathcal{X},x}$. Therefore, we conclude that $\omega_\mathfrak{p}$ does not vanish at $x$.

Let us show the if part. Assume that $\omega_\mathfrak{p}$ does not vanish at $x$. We may assume that $x$ is a closed point since both log smoothness of $\mathcal{X}^\dag \to S^\dag$ and non-vanishing of $\omega_\mathfrak{p}$ are open conditions. Replacing $S$ by an \'etale covering, we may assume that the residue field of $\mathcal{O}_{\mathcal{X},x}$ is equal to $k$. Put $e=\dim U_{\mathfrak{p}}$. Then $e>0$ since $\omega_\mathfrak{p}$ does not vanish at $x$. Put $P_0=\mathcal{M}_x/\mathcal{O}_{\mathcal{X},x}^\times$ and $P=P_0\oplus\mathbf{N}^e$. Since $\mathcal{X}^\dag$ is log regular, the equalities $\dim\mathcal{O}_{\mathcal{X},x}=e+\mathrm{rank}\,P_0^\mathrm{gp}=\mathrm{rank}\,P^\mathrm{gp}$ hold \cite[2.1]{Kato}, and we may choose a chart for $\mathcal{X}^\dag$ around $x$, consisting of an affine neighbourhood $W = \mathrm{Spec}\,A$ of $x$ and a morphism $\varphi_0\colon P_0\to A$ \cite[1.6]{Kato}. We take $u\in A\cap\mathcal{O}_{\mathcal{X},x}^\times$ and $c_0\in P_0$ so that $\pi = u\varphi_0(pc_0)$ as before. Shrinking $W$, we may take a lifting $(a_i\in A)_{i=1}^e$ of a system of parametners of the regular local ring $\mathcal{O}_{U_{\mathfrak{p}},x}$ so that $a_1-u\in R$ since $d\overline{u}=\overline{u}\,\mathrm{dlog}\,\overline{u}$ does not vanish at $x$. Shrinking $W$, we may assume that $u\in A^\times$ and $a_i-1\in A^\times$ for any $i$. Then the morphisms $v\colon \mathbf{N} \to P$, $n\mapsto(npc_0,n,0,\dots,0)$ and $\varphi\colon P \to A$, $(c,n_1,\dots,n_e)\mapsto \varphi_0(c)u^{n_1}\prod_{i=2}^e(a_i-1)^{n_i}$ yield a chart for $\mathcal{X}^\dag \to S^\dag$, where $\varphi(v(1)) = \pi$, and the cokernel of $v^\mathrm{gp}\colon \mathbf{N}^\mathrm{gp} \to P^\mathrm{gp}$ does not have $p$-torsion. Let us show that the induced map $$\begin{tikzcd}\Phi\colon W \ar[r]& W_0=\mathrm{Spec}\,R[P]/(\pi - v(1))\end{tikzcd}$$ is \'etale at $x$. The homomorphism $\Phi^\sharp\colon\widehat{\mathcal{O}}_{W_0,\Phi(x)}\to\widehat{\mathcal{O}}_{W,x}$ associated with $\Phi$ between the completions with respect to the maximal ideals is surjective. Thus, since $\dim\widehat{\mathcal{O}}_{W,x}=\mathrm{rank}\,P^\mathrm{gp}=\dim\widehat{\mathcal{O}}_{W_0,\Phi(x)}$, and both $\widehat{\mathcal{O}}_{W,x}$ and $\widehat{\mathcal{O}}_{W_0,\Phi(x)}$ are integral domains, the homomorphism $\Phi^\sharp$ is an isomorphism, which implies that $\Phi$ is \'etale at $x$. Therefore, the morphism $\mathcal{X}^\dag \to S^\dag$ is log smooth at $x$. \end{proof}

\begin{prop} \label{prop:vanish} We use the notation introduced in Definition \ref{defi:omega}. \begin{itemize} \item[(1)] Let $\mathfrak{p} \in F(\mathcal{X}^\dag)$ such that $V_\mathfrak{p}\subset\mathcal{X}_\pp$ and $\mathcal{X}^\dag\to S^\dag$ is log smooth on $V_\mathfrak{p}$. Then $\omega_{V_\mathfrak{p}}$ is a nowhere vanishing $1$-form. \item[(2)] Let $U$ be an open subscheme of the reduction of $\mathcal{X}_s$ contained in $\mathcal{X}_\pp$. Then the following statements are equivalent: (a) $\mathcal{X}^\dag\to S^\dag$ is nowhere log smooth on $U$; (b) $\omega_U$ vanishes on a dense open subset; (c) $\omega_U=0$. \end{itemize} \end{prop}
\begin{proof} Proposition \ref{prop:logsmoothlocus}.(2) shows (1) since $V_\mathfrak{p}$ is the disjoint union of subschemes $U_\mathfrak{q}$ of $V_\mathfrak{p}$, where $\mathfrak{q}$ ranges over all specializations of $\mathfrak{p}$ in $F_\pp(\mathcal{X}^\dag)$, and $\omega_{U_\mathfrak{q}}$ is equal to the pullback of $\omega_{V_\mathfrak{p}}$ under the immersion $U_\mathfrak{q}\to V_\mathfrak{p}$.

Let us show (2). The disjoint union of open subsets $U\cap U_\mathfrak{p}$ of $U$ is dense in $U$, where $\mathfrak{p}$ ranges over all points on $F_\pp(\mathcal{X}^\dag)$ of codimension $1$, and $\omega_{U_\mathfrak{p}}$ is equal to the pullback of $\omega_U$ under the open immersion $U_\mathfrak{p}\to U$. Thus, since the log smoothness of $\mathcal{X}^\dag\to S^\dag$ is an open condition, Proposition \ref{prop:logsmoothlocus}.(2) shows the equivalence of (a) and (b). Since (c) immediately implies (b), it remains to prove the converse.

Assume that (b) holds. We use the same notation as in the proof of Proposition \ref{prop:omega}. We may assume that $\mathcal{X}=W$ and $u\in A^\times$. Take $\mathfrak{p}\in F_\pp(\mathcal{X}^\dag)$ of codimension $1$. We denote by $u_\mathfrak{p}$ the pullback of $u$ under the immersion $U\cap V_\mathfrak{p}\to W$. Then $\mathrm{dlog}\,u_\mathfrak{p}$ vanishes on a dense open subset by assumption. Since $V_\mathfrak{p}$ is normal, Lemma \ref{lemm:root} gives $w_\mathfrak{p}\in H^0(U\cap V_\mathfrak{p},\mathcal{O}_{V_\mathfrak{p}}^\times)$ such that $w_\mathfrak{p}^p=u_\mathfrak{p}$. For any other $\mathfrak{q}$, the pullbacks of $w_\mathfrak{p}$ and $w_\mathfrak{q}$ to the reduced closed subscheme $U\cap V_\mathfrak{p}\cap V_\mathfrak{q}$ are equal. Thus, Lemma \ref{lemm:paste} shows that the sections $w_\mathfrak{p}$ for all $\mathfrak{p}$ glue to $w\in H^0(U,\mathcal{O}_U^\times)$, and the equality $w^p=u$ holds. Therefore, (c) holds, which concludes the proof.
\end{proof}

The following corollary illustrates the fact that log smoothness must show strongly restricted geometric behaviour on $\mathcal{X}_\pp$; for example, any maximally degenerate point contained in $\mathcal{X}_\pp$ cannot be a log smooth point:

\begin{coro} \label{coro:restrictions} Let $X$ as above, and let $\mathfrak{p} \in F(\mathcal{X}^\dag)$ such that $V_\mathfrak{p}\subset\mathcal{X}_\pp$ and $\mathcal{X}^\dag\to S^\dag$ is log smooth on $V_\mathfrak{p}$. Then $V_\mathfrak{p}$ cannot be zero-dimensional. If $V_\mathfrak{p}$ is one-dimensional and proper over $k$, then it is a curve of genus $1$. If $V_\mathfrak{p}$ is two-dimensional, smooth and proper over $k$, then it is not of general type. \end{coro}

\begin{proof} Proposition \ref{prop:vanish}.(1) shows that $\Omega^1_{V_\mathfrak{p}/k}$ has a nowhere vanishing global section. Thus, the first two statements hold, and the last statement follows from \cite[Theorem 5.1]{La}. \end{proof}

\section{Existence of tame points} \label{section3}

We keep the notation from the previous section. We will present sufficient geometric conditions for the existence of a $K^t$-point on smooth, proper $K$-varieties with logarithmic good reduction (in particular, we give a proof of Theorem \ref{maintheorem}). We need the following fact:

\begin{lemm} \label{lemm:index} Let $X$ be a smooth, proper $K$-variety, and let $\mathcal{X}$ be a proper model of $X$ over $S$ such that $\mathcal{X}^\dag$ is log regular. Then $X(K^t) \neq \emptyset$ if and only if $\mathcal{X}_\pp \neq \mathcal{X}_s$. \end{lemm}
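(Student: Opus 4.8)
The plan is to prove the equivalence $X(K^t) \neq \emptyset \iff \mathcal{X}_\pp \neq \mathcal{X}_s$ by relating tame points on $X$ to quasi-sections of $\mathcal{X}^\dag \to S^\dag$ supported in the log smooth locus. First I would recall that, since $K^t = \bigcup_{(e,p)=1} K_e$ where $K_e$ is the degree-$e$ totally tamely ramified extension, a $K^t$-point of $X$ is the same as a $K_e$-point for some $e$ prime to $p$. By the valuative criterion of properness applied to the proper model $\mathcal{X}$, such a $K_e$-point extends uniquely to an $\mathcal{O}_{K_e}$-point of $\mathcal{X}$, whose closed point lands somewhere in the special fibre $\mathcal{X}_s$. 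So the existence of a $K^t$-point is equivalent to the existence, for some $e$ prime to $p$, of a point of $\mathcal{X}_s$ through which passes the specialisation of an $\mathcal{O}_{K_e}$-section.

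The forward direction (if $X(K^t) \neq \emptyset$ then $\mathcal{X}_\pp \neq \mathcal{X}_s$) I would handle by a local computation at the specialisation point $x$ of the tame section. The key point is that a section over $\mathcal{O}_{K_e}$ with $e$ prime to $p$ forces the ramification at $x$ to be tame, which in the chart language of Proposition \ref{prop:logsmoothlocus} means that the multiplicity $m^\sharp_\mathfrak{p}$ of the stratum $U_\mathfrak{p}$ containing $x$ must be prime to $p$: otherwise pulling back the uniformiser $\pi = u f^p$ along the section would force $p \mid e$. Concretely, if $x \in U_\mathfrak{p}$ lies on a stratum with $p \mid m^\sharp_\mathfrak{p}$, then the induced map $\mathbf{N} \to \mathcal{M}^\sharp_{\mathcal{X},\mathfrak{p}}$ sends $1$ to something divisible by $p$, and the valuation of $\pi$ in $\mathcal{O}_{K_e}$ pulled back through the section would be a multiple of $p$, contradicting $(e,p)=1$. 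Hence $x \in \mathcal{X}_\tp$, so $\mathcal{X}_\tp \cap \mathcal{X}_s \neq \emptyset$, i.e.\ $\mathcal{X}_\pp \neq \mathcal{X}_s$.

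For the converse (if $\mathcal{X}_\pp \neq \mathcal{X}_s$ then $X(K^t) \neq \emptyset$), I would pick a closed point $x$ in the non-empty open subset $\mathcal{X}_\tp \cap \mathcal{X}_s$ of the special fibre, and by Proposition \ref{prop:logsmoothlocus}(1) the morphism $\mathcal{X}^\dag \to S^\dag$ is log smooth at $x$. The strategy is then to use log smoothness to produce a formal, and then algebraic, tame quasi-section through $x$. Working in a Kato chart $\mathbf{N} \to P \to A$ at $x$ with $\mathrm{coker}(v^{\mathrm{gp}})$ free of $p$-torsion, the stratum multiplicity $m^\sharp_\mathfrak{p}$ is prime to $p$, and I would exhibit an $\mathcal{O}_{K_e}$-point specialising to $x$ with $e = m^\sharp_\mathfrak{p}$ (or a divisor thereof), using the smoothness of the stratum $U_\mathfrak{p}$ together with a Hensel-type lifting argument adapted to the log structure: after the tame base change $\mathcal{O}_K \to \mathcal{O}_{K_e}$ the pulled-back model acquires an honest smooth point over the closed point, to which classical Hensel's lemma applies. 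The resulting $\mathcal{O}_{K_e}$-point gives a $K_e$-point, hence a $K^t$-point of $X$.

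The main obstacle I expect is the converse direction, specifically making the tame base change and lifting argument precise: one must check that after adjoining an $e$-th root of the uniformiser with $e$ equal to (a suitable multiple determined by) $m^\sharp_\mathfrak{p}$, the normalisation of the base-changed model is smooth over $\mathcal{O}_{K_e}$ in a neighbourhood of a point above $x$, so that a genuine section exists. This is exactly where log smoothness at $x$ must be converted into classical smoothness after tame base change, and where one needs that the relevant torsion in $\mathrm{coker}(v^{\mathrm{gp}})$ is prime to $p$; handling the chart carefully, together with the smoothness and normality of the strata recalled in \S \ref{section2}, should make this go through, but it is the technical heart of the argument.
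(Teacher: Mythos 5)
Your forward implication is correct, and it is more self-contained than the paper's treatment: the paper proves both directions at once by desingularising $\mathcal{X}^\dag$ via log blow-ups to reduce to a regular model with strict normal crossings special fibre, and then invoking \cite[Corollary 2.7]{KN} or \cite[Theorem 8.2]{GLL}, whereas you argue directly that if an $\mathcal{O}_{K_e}$-section with $(e,p)=1$ specialised to a point $x\in U_\mathfrak{p}$ with $\mathfrak{p}\in F_{\pp}(\mathcal{X}^\dag)$, then writing $\pi=uf^p$ near $x$ (as in the proof of Proposition \ref{prop:logsmoothlocus}) and pulling back along the section would give $e\in p\mathbf{Z}$, a contradiction. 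One small slip: $\mathcal{X}_{\tp}\cap\mathcal{X}_s$ is \emph{closed} in $\mathcal{X}_s$, not open (this is exactly the Remark in \S\ref{section2}), though nothing in your argument actually uses openness.

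The converse, which you rightly call the technical heart, has a genuine gap: the claim that one may take $e=m^\sharp_\mathfrak{p}$ (or a divisor thereof), i.e.\ that after a degree-$m^\sharp_\mathfrak{p}$ base change the normalised model acquires a smooth point above $x$, is false whenever $x$ lies in a stratum of codimension at least $2$ in the Kato fan, and such strata cannot be avoided. Concretely, take $p=2$, let $Q\subset\mathbf{Z}^2$ be the saturated monoid generated by $(1,0)$, $(1,1)$, $(1,2)$, and consider $\mathcal{X}=\mathrm{Spec}\,R[Q]/(\pi-z^{(3,2)})\cong\mathrm{Spec}\,R[x,w,y]/(xy-w^2,\,\pi-xw^2)$. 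Here $\mathcal{X}^\dag\to S^\dag$ is even log smooth (the cokernel of $\mathbf{Z}\to\mathbf{Z}^2$, $1\mapsto(3,2)$, is torsion free), the special fibre is $2D_1+4D_2$ with $D_1,D_2$ meeting at the origin, so both divisorial strata lie in the $2$-locus, and the $p'$-locus meets $\mathcal{X}_s$ in the origin alone, where $m^\sharp=1$ since $(3,2)$ is divisible by no integer $>1$ in $Q$. Yet no $\mathcal{O}_K$-point passes through the origin: such a section would satisfy $v(x)\geq 1$, $v(w)\geq 1$ and $v(x)+2v(w)=v(\pi)=1$, which is absurd; the smallest ramification index of a quasi-section through the origin is $3$, realised by $(x,w,y)=(\varpi,\varpi,\varpi)$ with $\varpi^3=\pi$. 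So tame points exist, as the Lemma predicts, but with $e$ unrelated to $m^\sharp_\mathfrak{p}$; moreover you cannot repair the argument by choosing $x$ on a divisorial stratum of the $p'$-locus, since in this example there are none (at a non-regular log regular point, divisibility of $\pi$ in $\mathcal{M}^\sharp_x$ is \emph{not} detected by the divisorial multiplicities, here $2$ and $4$). Such points genuinely occur on proper log regular models, e.g.\ by contracting the multiplicity-$3$ curve in a $2$--$3$--$4$ chain of an snc model. What a correct local proof needs at a deep stratum is an interior lattice point $w$ of the dual cone $\mathrm{Hom}(\mathcal{M}^\sharp_x,\mathbf{N})$ with $\langle\pi,w\rangle$ prime to $p$ --- its existence follows from $p\nmid m^\sharp_\mathfrak{p}$ by a duality argument using saturatedness --- and this $w$, not $m^\sharp_\mathfrak{p}$, dictates the tame base change; alternatively one follows the paper and reduces to the snc case by log blow-ups (which preserves $X$ and the condition $\mathcal{X}_\pp\neq\mathcal{X}_s$), where your base-change-plus-Hensel computation on a component of multiplicity prime to $p$ does work and is precisely the cited result of \cite{KN}, resp.\ \cite{GLL}.
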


\begin{rema} This result does not seem to be available in the literature in this precise form. If $\mathcal{X}$ is (classically) regular, then it follows immediately from \cite[Corollary 2.7]{KN} or the much more general \cite[Theorem 8.2]{GLL}. To reduce to this case, it suffices to argue that the equality $\mathcal{X}_s = \mathcal{X}_\pp$ remains invariant under log blow-ups, since any (potentially singular) log regular model can be transformed into a classically regular model using such transformations.

Recall that $\mathcal{X} \to S$ induces a morphism of fans $F(\mathcal{X}^\dag) \to F(S^\dag)$. We have a canonical identification $F(S^\dag) \cong \mathrm{Spec}\,\mathbf{N}$. Saying that $\mathcal{X}_\pp = \mathcal{X}_s$ is equivalent to saying that for \emph{every} morphism of fans $\mathrm{Spec}\,\mathbf{N} \to F(\mathcal{X}^\dag)$, the composite $\mathrm{Spec}\,\mathbf{N} \to F(\mathcal{X}^\dag) \to F(S^\dag) \cong \mathrm{Spec}\,\mathbf{N}$ corresponds to a homomorphism of monoids $\mathbf{N} \to \mathbf{N}$ which maps $1$ to a multiple of $p$. On the other hand, if $\widetilde{\mathcal{X}}^\dag \to \mathcal{X}^\dag$ is a log blow-up, the induced map $F(\widetilde{\mathcal{X}}^\dag)(\mathbf{N}) \to F(\mathcal{X})(\mathbf{N})$ is bijective by \cite[\S 9]{Kato}. Therefore, we have $\mathcal{X}_\pp = \mathcal{X}_s$ if and only if $\widetilde{\mathcal{X}}_\pp = \widetilde{\mathcal{X}}_s$, as desired. \end{rema}

The following result generalises \cite[Proposition 5.1]{Sm}.

\begin{prop} \label{prop:vanishingofchi} Let $X$ be a smooth, proper $K$-variety, and let $\mathcal{X}$ be a proper, regular model of $X$ over $S$ such that $\mathcal{X}^\dag$ is log smooth over $S^\dag$. If $\mathfrak{p} \in F(\mathcal{X}^\dag)$ satisfies $V_\mathfrak{p}\subset\mathcal{X}_\pp$, then $\chi(V_\mathfrak{p}) = \chi(U_\mathfrak{p}) = 0$. In particular, if $X(K^t) = \emptyset$, or equivalently, $\mathcal{X}_\pp=\mathcal{X}_s$, then $\chi(\mathcal{X}_s) = 0$. \end{prop}

\begin{proof} Since $V_\mathfrak{p}$ is smooth and proper over $k$, the Euler number $\chi(V_\mathfrak{p})$ -- by which we mean the $\ell$-adic Euler characteristic, for any prime $\ell \neq p$ -- can be computed as the positive or negative degree of the top Chern class of the vector bundle $\Omega^1_{V_\mathfrak{p}/k}$ on $V_\mathfrak{p}$; the sign depends on the dimension. Proposition \ref{prop:vanish}.(1) implies that $\Omega^1_{V_\mathfrak{p}/k}$ has a nowhere vanishing global section, and hence, its top Chern class vanishes, which implies that $\chi(V_\mathfrak{p})=0$. Thus, the first statement follows by the induction on the dimension of $V_\mathfrak{p}$ since $V_\mathfrak{p}$ is the disjoint union of $U_\mathfrak{q}$, where $\mathfrak{q}$ ranges over all specializations of $\mathfrak{p}$ in $F_\pp(\mathcal{X}^\dag)$ (including $\mathfrak{p}$). The last statement follows from the first one since $\mathcal{X}_\pp$ is the disjoint union of $U_\mathfrak{q}$, where $\mathfrak{q}$ ranges over all points on $F_\pp(\mathcal{X}^\dag)$. \end{proof}

We are now able to prove Theorem \ref{maintheorem}:

\begin{proof}[Proof of Theorem \ref{maintheorem}] The assumptions of the theorem and \cite[Corollary 0.1.1]{Na} together imply that the wild inertia of $\mathrm{Gal}(K^s/K)$ acts trivially on $H^\star(X \times_K K^s, \mathbf{Q}_\ell)$. Hence, \begin{equation*}  \chi(X) = \chi^\mathrm{tame}(X) := \sum_{i \geq 0} (-1)^i \dim_{\mathbf{Q}_\ell} H^i(X \times_K K^t, \mathbf{Q}_\ell)\rlap{.}\end{equation*} 

The right hand side can be computed by means of Nakayama's description of nearby cycles for log smooth families \cite[Theorem 3.5]{Na}. Computations of this type have been carried out by Nicaise in \cite{Ni} and also by the second named author in \cite[\S 3]{Sm}, where the \emph{tame monodromy zeta function} $\zeta_X^\mathrm{tame}$ was calculated. Since $k$ is algebraically closed, the group $\mathrm{Gal}(K^t/K)$ is procyclic, topologically generated by an element $\varphi$. The tame monodromy zeta function is given by $$\zeta_X^\mathrm{tame}(t) := \prod_{m \geq 0} \det\left(t \cdot \mathrm{Id} - \varphi \mid H^m(X \times_K K^t, \mathbf{Q}_\ell)\right)^{(-1)^{m + 1}}\rlap{,}$$ which is independent of the choice of $\varphi$ (as will be clear from (\ref{eq:tamezeta}) below). Given a point $\mathfrak{p}\in F(\mathcal{X}^\dag)$ of codimension $1$, denote by $m_\mathfrak{p}'$ the biggest prime-to-$p$ divisor of the multiplicity $m_\mathfrak{p}$ of the corresponding component of $\mathcal{X}_s$. Then \cite[Corollary 3.8]{Sm} yields

\begin{equation} \label{eq:tamezeta} \zeta_X^\mathrm{tame}(t) = \prod_{\mathfrak{p}} (t^{m_\mathfrak{p}'} - 1)^{-\chi(U_\mathfrak{p})}\rlap{,}\end{equation} where $\mathfrak{p}$ ranges over all points on $F(\mathcal{X}^\dag)$ of codimension $1$. Hence, we obtain \begin{equation} \label{eq:tameeulerbis} \chi^\mathrm{tame}(X) = \sum_{\mathfrak{p}} m'_\mathfrak{p} \chi(U_\mathfrak{p})\rlap{,}\end{equation}
simply by taking the negative degrees on both sides of (\ref{eq:tamezeta}). We may assume that $\mathcal{X}$ is regular after desingularization by log blow-ups. Thus, if $X(K^t) = \emptyset$, then Proposition \ref{prop:vanishingofchi} says that the right hand side of (\ref{eq:tameeulerbis}), and hence also $\chi(X)$, vanish. This proves the theorem.
\end{proof}

\section{A numerical criterion} \label{section4}

As in the previous paragraphs, we assume that $X$ is a smooth, proper $K$-variety and that $\mathcal{X}$ is a proper, flat model of $X$ over $S$ such that $\mathcal{X}^\dag$ is log regular.

The special fibre $\mathcal{X}_s$ considered as a Weil divisor on $\mathcal{X}$ may be written as $$\mathcal{X}_s = \textstyle \sum_{\mathfrak{p} \in F(\mathcal{X}^\dag)^{(1)}} m_\mathfrak{p}^\sharp V_\mathfrak{p}\rlap{,}$$ where $F(\mathcal{X}^\dag)^{(1)}$ denotes the set of points on $F(\mathcal{X}^\dag)$ of codimension $1$. Put $m = m_{\mathcal{X}^\dag}$ (Definition \ref{defi:locus}) and $n_\mathfrak{p} = m_\mathfrak{p}^\sharp/m$ for each $\mathfrak{p} \in F(\mathcal{X}^\dag)$. We define a Weil divisor on $\mathcal{X}$ by $$D = \textstyle \sum_{\mathfrak{p} \in F(\mathcal{X}^\dag)^{(1)}} n_\mathfrak{p} V_\mathfrak{p}\rlap{.}$$ Then $D$ is a Cartier divisor by Proposition \ref{prop:omega}.(1).

We denote by $E$ the reduction of the special fibre $\mathcal{X}_s$ of $\mathcal{X}$, and by $\mathcal{L}$ the pullback of $\mathcal{O}_{\mathcal{X}}(D)$ under the closed immersion $E\to\mathcal{X}$. As an element of $\mathrm{Pic}\,E$, the line bundle $\mathcal{L}$ has finite order dividing $m$. Indeed, $\mathcal{L}^{\otimes m}$ is the trivial line bundle since $mD = \mathcal{X}_s$ and $\mathcal{O}_{\mathcal{X}}(\mathcal{X}_s)\cong\mathcal{O}_{\mathcal{X}}$. Let us denote by $\mu$ the order of $\mathcal{L}$.

\begin{lemm} \label{lemm:power} The ratio $m/\mu$ is a power of $p$. \end{lemm}

\begin{proof} Let $$\mathcal{I} =\mathcal{O}_{\mathcal{X}}(-D),\ \mathcal{I}_0=\sqrt{\mathcal{I}},\ \mathcal{I}_n=\mathcal{I}^n\ \textrm{ and }\ \mathcal{N}_n =\mathcal{I}_n/\mathcal{I}_{n+1}$$ for each $n\in\mathbf{Z}_{>0}$. For each $n\in\mathbf{Z}_{\geq0}$, we denote by $i_n\colon\mathcal{Y}_n\to\mathcal{X}$ the closed immersion defined by $\mathcal{I}_n$, and by $\mu_n$ the order of $i_n^*\mathcal{O}_{\mathcal{X}}(D)$ in $\mathrm{Pic}\,\mathcal{Y}_n$. Then $\mathcal{Y}_0=E$, and $\mu_0=\mu$. Take $n\in\mathbf{Z}_{\geq0}$. Then the exact sequence of sheaves of abelian groups on $\mathcal{X}$ given by $$\begin{tikzcd}0\ar[r]&1+\mathcal{N}_n\ar[r]&(i_{n+1})_*\mathcal{O}_{\mathcal{Y}_{n+1}}^\times\ar[r]&(i_n)_*\mathcal{O}_{\mathcal{Y}_n}^\times\ar[r]&0\end{tikzcd}$$ yields the exact sequence of abelian groups $$\begin{tikzcd}H^1(\mathcal{X},1+\mathcal{N}_n)\ar[r]&\mathrm{Pic}\,\mathcal{Y}_{n+1}\ar[r,"\alpha"]&\mathrm{Pic}\,\mathcal{Y}_n\rlap{.}\end{tikzcd}$$ Since $1+\mathcal{N}_n$ is $p$-torsion, the abelian group $H^1(\mathcal{X},1+\mathcal{N}_n)$ is $p$-torsion as well. It follows that the kernel of $\alpha$ is $p$-torsion, which implies that $\mu_{n+1}\mid p\mu_n$. If $r \in \mathbf{Z}$ is sufficiently large, then $\mu_r = m$ by \cite[Lemme 6.4.4]{Ra}. Since $\mu_0=\mu$, the result follows.
\end{proof}

\begin{prop} \label{prop:normalbundle} With notation as above, the following conditions are equivalent: \begin{itemize} \item[(1)] there exists a point $x$ on $\mathcal{X}_s$ such that $\mathcal{X}^\dag \to S^\dag$ is log smooth at $x$; \item[(2)] the equality $m=\mu$ holds. \end{itemize} \end{prop}

\begin{proof} If $m$ is prime to $p$, then (1) holds by Proposition \ref{prop:logsmoothlocus}.(1), and (2) holds by Lemma \ref{lemm:power}. Thus, we may assume that $p\mid m$. Put $\mathcal{L}'=\mathcal{L}^{\otimes(m/p)}$. Lemma \ref{lemm:power} implies that (2) is equivalent to the condition that $\mathcal{L}'$ is non-trivial. Take $\omega_E\in H^0(E,\Omega^1_{E/k})$ in Definition \ref{defi:omega}. Proposition \ref{prop:vanish}.(2) implies that (1) is equivalent to the condition $\omega_E\not=0$. Thus, Lemma \ref{lemm:exact} concoludes the proof since $\psi(\omega_E)=\mathcal{L}'$ by Remark \ref{rema:explicit}. \end{proof}

\section{Logarithmic good reduction of curves of genus $1$} \label{section5}

In this section, we will prove Theorem \ref{genus1theorem}. For simplicity, we will assume the residue field $k$ to be algebraically closed (as we did in the introduction). Recall that if $X$ is a curve of positive genus over $K$, then among all proper, regular models of $X$ over $S$ with strict normal crossings special fibre, there is a minimal one, the \emph{minimal sncd model}. 

Let us first show that to verify whether a given curve of positive genus over $K$ has logarithmic good reduction, it suffices to consider this particular model.

\begin{lemm} \label{lemm:minimal} Let $X$ be a smooth, proper curve of positive genus over $K$. If $X$ has logarithmic good reduction and if $\mathcal{X}$ is its minimal sncd model, then $\mathcal{X}^\dag \to S^\dag$ is log smooth. \end{lemm}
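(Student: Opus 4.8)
The plan is to start from an arbitrary logarithmically smooth model, replace it by a \emph{regular} model with strict normal crossings special fibre without losing log smoothness, and then descend to the minimal sncd model by contracting superfluous $(-1)$-curves one at a time, verifying at each step that log smoothness is preserved. Concretely: since $X$ has logarithmic good reduction, there is a proper model $\mathcal{Z}$ with $\mathcal{Z}^\dag \to S^\dag$ log smooth. As $\mathcal{Z}$ is two-dimensional and log regular, its singularities are toric and can be resolved by a log blow-up $\mathcal{Y} \to \mathcal{Z}$; this morphism is log \'etale and an isomorphism over the generic fibre, so $\mathcal{Y}$ is again a proper model of $X$, the composite $\mathcal{Y}^\dag \to S^\dag$ is still log smooth, and $\mathcal{Y}$ is now regular with sncd special fibre (being at once regular and log regular).

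Since $X$ has positive genus, the minimal sncd model $\mathcal{X}$ exists and is dominated by every regular sncd model; in particular the minimalisation process expresses the canonical birational contraction $\mathcal{Y} \to \mathcal{X}$ as a finite sequence of contractions of $(-1)$-curves in the special fibre, each of which preserves the sncd property. It therefore suffices to prove that contracting a single such $(-1)$-curve $C_0$, of multiplicity $m_0$, takes a log smooth sncd model to a log smooth sncd model. Away from the point $\bar P$ onto which $C_0$ is contracted the contraction is an isomorphism, so the issue is purely local at $\bar P$ and is governed by how $C_0$ meets the rest of the special fibre. An sncd-preserving contraction of a $(-1)$-curve occurs in exactly two shapes: $C_0$ meets the remaining components in a single point, or in two points lying on two distinct components (the case of a $(-1)$-curve disjoint from the rest cannot occur, as $X$ has positive genus).

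In the first case the open stratum $U_{\mathfrak{p}_0} = C_0 \setminus \{P\}$ is isomorphic to $\mathbf{A}^1_k$, so $\chi(U_{\mathfrak{p}_0}) = 1 \neq 0$; by Proposition \ref{lemm:vanishingofchi} applied to the log smooth model $\mathcal{Y}$ this forces $\mathfrak{p}_0 \in F_\tp(\mathcal{Y}^\dag)$, i.e.\ $p \nmid m_0$. Writing $C_1$ for the unique component met by $C_0$ and combining $\mathcal{Y}_s \cdot C_0 = 0$ with $C_0^2 = -1$ and $C_0 \cdot C_1 = 1$, we obtain $m_0 = m_1$, so $p \nmid m_1$ as well. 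Hence $\bar P$ is a smooth point of $C_1$ lying in the $p'$-locus, and log smoothness at $\bar P$ is immediate from Proposition \ref{prop:logsmoothlocus}.(1). In the second case $\bar P = C_1 \cap C_2$ is a node of the contracted model, and the contraction is, locally around $\bar P$, nothing but the log blow-up of that node; it is therefore log \'etale and surjective, so log smoothness of $\mathcal{Y}^\dag \to S^\dag$ descends to log smoothness of the contracted model at $\bar P$. Iterating over all contractions yields the log smoothness of $\mathcal{X}^\dag \to S^\dag$.

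The first case is disposed of cleanly by the Euler-number obstruction of Proposition \ref{lemm:vanishingofchi}, which rules out the only rational stratum of nonzero Euler characteristic; the real work is concentrated in the second case, where the Euler number of the stratum $\mathbf{G}_{m,k}$ vanishes and so provides no information. There the main point to be justified carefully is that log smoothness descends along the surjective log \'etale contraction of the node --- equivalently, that a Kato chart for $\mathcal{Y}^\dag$ near $C_0$ descends to a Kato chart for $\mathcal{X}^\dag$ at $\bar P$ whose cokernel is free of $p$-torsion. This descent of log smoothness along log blow-ups is where I expect the main technical effort to lie; it can alternatively be checked by a direct computation with the explicit chart $\mathbf{N} \to \mathbf{N}^2$ at the node.
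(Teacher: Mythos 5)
Your proposal is correct and follows essentially the same route as the paper's proof: resolve the given log smooth model by a log blow-up to a regular sncd model, factor the map to the minimal sncd model into $(-1)$-curve contractions, handle the one-component case via the Euler-number obstruction of Proposition \ref{lemm:vanishingofchi} (the paper applies it, via Corollary \ref{coro:restrictions}, to the zero-dimensional stratum at the intersection point rather than to your $\mathbf{A}^1_k$-stratum, with the same conclusion $p \nmid m_0 = m_1$), and treat the two-component case by observing that the contraction is a log blow-up. The descent step you flag as the main technical effort is in fact the paper's one-line easy case, and your alternative chart computation closes it immediately: log smoothness of $\mathcal{Y}^\dag \to S^\dag$ at the two nodes on $C_0$ forces $p \nmid \gcd(m_0,m_1)$ and $p \nmid \gcd(m_0,m_2)$, and since $m_0 = m_1 + m_2$ this gives $p \nmid \gcd(m_1,m_2)$, so the node $\bar P$ lies in the $p'$-locus of $\mathcal{X}$ and Proposition \ref{prop:logsmoothlocus}.(1) applies.
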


\begin{proof} Let $\mathcal{X}_1$ be a proper model of $X$ over $S$ such that $\mathcal{X}_1^\dag \to S^\dag$ is log smooth. There exists a desingularisation by log blow-ups $\mathcal{X}_2^\dag \to \mathcal{X}_1^\dag$, so that the underlying scheme $\mathcal{X}_2$ is regular. Then $\mathcal{X}_2^\dag \to S^\dag$ is still log smooth since log blow-ups are log \'etale.

By minimality, we get a morphism $\mathcal{X}_2 \to \mathcal{X}$ to the minimal sncd model. This morphism successively contracts $(-1)$-curves which intersect at most two other components  of the special fibre, each of them in one point. If such a $(-1)$-curve meets two other components, the corresponding contraction is a log blow-up and preserves log smoothness. If it meets only one component, the contraction map is no longer a log blow-up. However, the multiplicities of the $(-1)$-curve and of the component which it intersects must be prime to $p$: if not, the original morphism would not be log smooth at the intersection point by Corollary \ref{coro:restrictions}. Hence, the contraction again preserves log smoothness and $\mathcal{X}^\dag \to S^\dag$ is log smooth, as required. \end{proof}

Let us first deal with the simpler cases of Theorem \ref{genus1theorem}.

\begin{lemm} Let $C$ be a smooth, proper curve of genus $1$ over $K$ with period prime to $p$. If $H^1(C \times_K K^s,\mathbf{Q}_\ell)$ is tamely ramified, then $C$ has logarithmic good reduction. \end{lemm}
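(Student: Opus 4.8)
The plan is to construct an explicit log smooth model for $C$ and verify log smoothness using Proposition \ref{prop:logsmoothlocus}. Since the period $m$ is prime to $p$ and equals the index $\iota(C)$ (by Lichtenbaum's theorem, as recalled in the introduction), we know $C(K^t) \neq \emptyset$. The hypothesis that $H^1(C \times_K K^s, \mathbf{Q}_\ell)$ is tamely ramified is the key input: by the classical theory (Saito--Stix, or the structure theory of minimal models of genus $1$ curves), this controls the shape of the special fibre of the minimal sncd model $\mathcal{X}$.

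First I would pass to the minimal sncd model $\mathcal{X}$ of $C$, justified by Lemma \ref{lemm:minimal} (which says that if $C$ has log good reduction, then this model already witnesses it; here we are going the other direction, but the minimal sncd model is still the natural model to analyse). The tameness of the Galois action on $H^1$ forces the special fibre $\mathcal{X}_s$ to be of a very restricted type: by the theory of genus $1$ fibrations, tame monodromy means the special fibre is, up to the multiplicity structure, a configuration whose associated reduction is of type $I_n$ or more precisely a multiple fibre of tame (prime-to-$p$) multiplicity built on such a configuration. Concretely, since the Jacobian $\mathrm{Jac}\,C$ has (after the tameness hypothesis) potentially good or multiplicative reduction with tame monodromy, and $C$ is a torsor under it, the special fibre is a cycle of rational curves (or a smooth genus $1$ curve) possibly with a global multiplicity equal to $m$, which is prime to $p$.

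The central step is then to check, at every point $x \in \mathcal{X}_s$, that the integers $m_\mathfrak{p}^\sharp$ attached to the Kato fan points are all prime to $p$, i.e.\ that $\mathcal{X}_p = \emptyset$ and hence $\mathcal{X}_p \neq \mathcal{X}_s$. Because $m$ is prime to $p$ and the multiplicities appearing in the special fibre of the minimal sncd model of a genus $1$ curve all divide $m$ (they are the multiplicities of the components, which in the tame case are governed by $m$ and the local intersection combinatorics), every multiplicity $m_\mathfrak{p}^\sharp$ is prime to $p$. By the definition of the $p'$-locus, this means $F_p(\mathcal{X}^\dag) = \emptyset$, so $\mathcal{X}_p = \emptyset$, and Proposition \ref{prop:logsmoothlocus}.(1) applies at every point of $\mathcal{X}_s$: the morphism $\mathcal{X}^\dag \to S^\dag$ is log smooth everywhere. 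Hence $\mathcal{X}$ is a log smooth model and $C$ has logarithmic good reduction.

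The main obstacle I anticipate is verifying rigorously that tameness of the monodromy plus period prime to $p$ genuinely forces all the component multiplicities in the minimal sncd model to be prime to $p$. This requires invoking the precise classification of special fibres of genus $1$ curves (the analogue of Kodaira--N\'eron types, via Raynaud's theory of multiple fibres, cf.\ \cite{Ra}): one must rule out the appearance of any component whose multiplicity is divisible by $p$. In the tame case the multiplicity pattern is essentially that of a tame multiple fibre of multiplicity $m$ over a reduction of type $I_n$, so no $p$-divisible multiplicity can occur; but pinning this down cleanly — ideally by a direct combinatorial argument on the dual graph together with the constraint $m_\mathfrak{p}^\sharp \mid m$ and $p \nmid m$ — is where the real work lies. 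Once that combinatorial input is secured, the reduction to Proposition \ref{prop:logsmoothlocus}.(1) is immediate.
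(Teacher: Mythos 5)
There is a genuine gap in your argument, and it sits exactly where you anticipated the real work to be: the combinatorial claim at the heart of your proof --- that tameness of the monodromy together with $p \nmid m$ forces \emph{every} multiplicity $m_\mathfrak{p}^\sharp$ in the minimal sncd model to be prime to $p$, so that $\mathcal{X}_p = \emptyset$ and Proposition \ref{prop:logsmoothlocus}.(1) applies at every point --- is false. Tame ramification does not rule out additive reduction of the Jacobian. For instance, take $C = J$ an elliptic curve over $K$ with reduction of Kodaira type $\mathrm{II}^*$ and $p = 5$: the monodromy has order $6$, prime to $p$, so $H^1(C \times_K K^s,\mathbf{Q}_\ell)$ is tamely ramified and the period is $1$; yet the minimal sncd model (the $\widetilde{E}_8$ configuration of rational curves) contains a component of multiplicity $5$. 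Your intermediate assertions are false for the same reason: the special fibre need not be a multiple fibre over a smooth genus $1$ curve or a cycle of rational curves, and the multiplicities need not divide $m$ --- by \cite[Theorem 6.6]{LLR} they are $m$ \emph{times} the multiplicities of the special fibre of the minimal sncd model of the Jacobian, which can themselves be divisible by $p$.

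Because components of multiplicity divisible by $p$ genuinely occur in this setting, part (1) of Proposition \ref{prop:logsmoothlocus} cannot carry the whole proof: at points of such components one needs an extra geometric condition, which for curves is precisely Saito's criterion \cite[Theorem 3.11]{Sa} --- every component with multiplicity divisible by $p$ must be a copy of $\mathbf{P}^1_k$ meeting exactly two other components, each with multiplicity prime to $p$ (in the example above, the multiplicity-$5$ component meets components of multiplicities $4$ and $6$, so the criterion holds and the curve does have logarithmic good reduction, but not for your reason). The paper's proof proceeds accordingly: the Galois-equivariant isomorphism $H^1(C \times_K K^s,\mathbf{Q}_\ell) \cong H^1(J \times_K K^s,\mathbf{Q}_\ell)$ transfers tameness to the Jacobian $J$; Saito's theorem, applied to the elliptic curve $J$, shows that $\mathcal{J}_s$ satisfies the criterion above; and \cite[Theorem 6.6]{LLR} (the weighted dual graph of $\mathcal{C}_s$ is that of $\mathcal{J}_s$ multiplied by the prime-to-$p$ integer $m$) shows that $\mathcal{C}_s$ then satisfies it as well. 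To repair your argument you would have to treat the $p$-divisible multiplicities via Proposition \ref{prop:logsmoothlocus}.(2) or invoke Saito's criterion directly; your reduction to the statement that the $p$-locus is empty only covers the cases where the Jacobian has good or multiplicative reduction.
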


\begin{proof} We denote by $m$ the period of $C$ and by $J$ its Jacobian. Let $\mathcal{J}$ (resp.\ $\mathcal{C}$) be the minimal sncd model of $J$ (resp.\ $C$) over $S$. The existence of a $\mathrm{Gal}(K^s/K)$-equivariant isomorphism $$H^1(C \times_K K^s,\mathbf{Q}_\ell) \cong H^1(J \times_K K^s,\mathbf{Q}_\ell)$$ shows that $H^1(J \times_K K^s,\mathbf{Q}_\ell)$ is also tamely ramified. Since $J$ is an elliptic curve, the special fibre $\mathcal{J}_s$ of $\mathcal{J}$ satisfies Saito's criterion \cite[Theorem 3.11]{Sa}, i.e.\ each component with multiplicity divisible by $p$ is a copy of $\mathbf{P}^1_k$ intersecting exactly two other components, with multiplicities prime to $p$. Hence, the special fibre $\mathcal{C}_s$ of $\mathcal{C}$ satisfies the same property: indeed, it is a consequence of \cite[Theorem 6.6]{LLR} that the weighted dual graph of $\mathcal{C}_s$ is obtained by multiplying the weighted dual graph for $\mathcal{J}_s$ by the integer $m$, which is prime to $p$ (see Remark \ref{rema:sncdtype} below). This yields the result. \end{proof}

\begin{rema} \label{rema:sncdtype} We should note that \cite[Theorem 6.6]{LLR} deals with the \emph{types} associated with the minimal regular models instead of the minimal sncd models considered here; however, it is not hard to see that the conclusion of [\emph{loc.\,cit.}] remains valid for the minimal sncd models and the weighted dual graphs of their special fibres. \end{rema}

Hence, the only remaining cases of Theorem \ref{genus1theorem} are those where $p\mid m$. The case where the Jacobian has bad reduction is not difficult either:

\begin{lemm} Let $C$ be a smooth, proper curve of genus $1$ over $K$ with period divisible by $p$. If its Jacobian $J$ has bad reduction, then $C$ does not have logarithmic good reduction. \end{lemm}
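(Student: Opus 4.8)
The plan is to argue by contradiction: assuming that $C$ has logarithmic good reduction, I will show that $J$ must then have good reduction, contrary to the hypothesis. Since $k$ is algebraically closed the Brauer group of $K$ vanishes, so the period $m$ coincides with the index $\iota(C)$; as $p \mid m$, this means that $\iota(C)$ is not prime to $p$, i.e.\ that $C(K^t) = \emptyset$. By Lemma \ref{lemm:minimal} the minimal sncd model $\mathcal{C}$ has the property that $\mathcal{C}^\dag \to S^\dag$ is log smooth, which places us in the setting of Proposition \ref{lemm:vanishingofchi} and Corollary \ref{coro:restrictions}.

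Next I would pin down the shape of the special fibre. Because $\mathcal{C}_s$ has strict normal crossings, the points of $F(\mathcal{C}^\dag)$ of codimension $2$ are exactly the nodes of $\mathcal{C}_s$, and for each such point $\mathfrak{p}$ the stratum $V_\mathfrak{p}$ is zero-dimensional. Corollary \ref{coro:restrictions} forbids zero-dimensional strata once $C(K^t) = \emptyset$, so $\mathcal{C}_s$ has no nodes at all. As the special fibre of a proper model with geometrically connected generic fibre is itself connected, $\mathcal{C}_s$ must consist of a single irreducible component $V$; this $V$ is smooth by the sncd hypothesis and, invoking Corollary \ref{coro:restrictions} once more, of genus $1$. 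Writing $\mathcal{C}_s = dV$, the identity $\mathcal{C}_p = \mathcal{C}_s$ equivalent to $C(K^t) = \emptyset$ (Lemma \ref{lemm:index}) forces $p \mid d$. In other words, $\mathcal{C}_s$ is a multiple smooth fibre of type ${}_{d}I_{0}$ with $p \mid d$, exactly as in the examples of \cite{KU, HL}.

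It then remains to read off the reduction of the Jacobian from this configuration, and this is the heart of the matter. A multiple smooth fibre of type ${}_{d}I_{0}$ should become, on passing to the Jacobian, the smooth fibre $I_{0}$: using the comparison between the minimal regular model of a genus-$1$ curve and that of its Jacobian (\cite[Theorem 6.6]{LLR}, already invoked above) together with Raynaud's study of the relative Picard functor \cite{Ra}, one finds that the minimal regular model of $J$ has a single smooth genus-$1$ special fibre, so that $J$ has good reduction; equivalently, the $\ell$-adic monodromy of a multiple smooth fibre is trivial, whence $H^1(C \times_K K^s) \cong H^1(J \times_K K^s)$ is unramified and $J$ has good reduction by N\'eron--Ogg--Shafarevich. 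This contradicts the hypothesis that $J$ has bad reduction, and the lemma follows. The main obstacle is precisely this last implication in the wild range $p \mid d$: there the non-reduced fibre $dV$ may fail to be cohomologically flat over $\mathcal{O}_K$, which is the very phenomenon isolated by condition (b) of Theorem \ref{genus1theorem}, so one cannot recover the N\'eron model of $J$ by naively taking $\mathrm{Pic}^0$ of the special fibre; the model comparison of \cite{LLR, Ra} must be applied with care rather than through the tame reasoning used in the preceding lemma.
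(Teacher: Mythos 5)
Your proof is correct, and it uses the same three ingredients as the paper's own argument --- Lemma \ref{lemm:minimal}, the comparison theorem \cite[Theorem 6.6]{LLR} (with Remark \ref{rema:sncdtype}), and the Euler-characteristic obstruction of Proposition \ref{lemm:vanishingofchi} (which you invoke through its repackaging, Corollary \ref{coro:restrictions}) --- but runs them in the opposite, contrapositive direction. The paper argues from $J$ to $C$: bad reduction of $J$ means the minimal sncd fibre $\mathcal{J}_s$ has intersection points, so by \cite[Theorem 6.6]{LLR} the fibre $\mathcal{C}_s$ has nodes all of whose multiplicities are divisible by $p$ (as $p \mid m$), and log smoothness is then contradicted at such a node, where the stratum is a point with Euler number $1 \neq 0$. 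You argue from $C$ to $J$: assuming log smoothness of the minimal sncd model, Corollary \ref{coro:restrictions} (applicable because $p \mid m = \iota(C)$ forces $C(K^t) = \emptyset$ by Lichtenbaum) rules out zero-dimensional strata, hence nodes, so $\mathcal{C}_s = dV$ with $V$ a smooth genus-$1$ curve and $p \mid d$; then \cite[Theorem 6.6]{LLR}, read in the reverse direction, forces the type of $J$ to be a single genus-$1$ component of multiplicity $d/m$, which must equal $1$ since $J(K) \neq \emptyset$ guarantees a multiplicity-one component --- so $J$ has good reduction, a contradiction. Both routes are sound; yours is longer but produces as a by-product the precise fibre type ${}_{d}I_{0}$, the configuration realised in the examples of \cite{KU, HL}. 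One caveat: your ``equivalently''-clause via N\'eron--Ogg--Shafarevich is not an independent argument, since triviality of the monodromy of a wild multiple smooth fibre is exactly equivalent to good reduction of $J$; taken alone it would be circular, so the weight of the proof rests (as you correctly flag) on the fact that \cite[Theorem 6.6]{LLR} holds with no tameness hypothesis --- which is also how the paper itself uses it.
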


\begin{proof} Let $\mathcal{J}$ (resp.\ $\mathcal{C}$) be the minimal sncd model of $J$ (resp.\ of $C$). Lemma \ref{lemm:minimal} implies that it suffices to check that the morphism $\mathcal{C}^\dag \to S^\dag$ is not log smooth.  This follows from the combination of \cite[Theorem 6.6]{LLR} (applied to the minimal sncd model, see Remark \ref{rema:sncdtype})  and Corollary \ref{coro:restrictions}. Indeed, the morphism is certainly not log smooth at any point where two components of $\mathcal{C}_s$ intersect. \end{proof}

We can now finish the proof of Theorem \ref{genus1theorem} as follows.

\begin{proof}[End of Proof of Theorem \ref{genus1theorem}] The only case which remains is the one where $p\mid m$ and the Jacobian $J$ of $C$ has good reduction. Again considering the minimal sncd models of $J$ and $C$, \cite[Proposition 8.1]{LLR} says that the $S$-group scheme $\mathcal{J}$ acts on $\mathcal{C}$, inducing a transitive action of $\mathcal{J}(k)$ on $\mathcal{C}(k)$. It follows that on the special fibre, the morphism $\mathcal{C}^\dag \to S^\dag$ is either everywhere log smooth, or nowhere at all.

The reduction $D$ of the special fibre $\mathcal{C}_s$ of $\mathcal{C}$ is isomorphic to an elliptic curve. Denote by $\mu$ the order of $\mathcal{N}_{D/\mathcal{C}}$ in $\mathrm{Pic}\,D$. Using the above observation and Proposition \ref{prop:normalbundle}, we see that $\mathcal{C}^\dag \to S^\dag$ is log smooth (resp.\ nowhere log smooth on $\mathcal{C}_s$) if $m = \mu$ (resp.\ if $m \neq \mu$). However, $m = \mu$ is equivalent to $\mathcal{C}$ being cohomologically flat over $S$ by \cite[Corollary 2.3.3]{BT}. \end{proof}

Let us close with some simple remarks.

\begin{rema} The condition $m = \mu$ forces the curve $D$ in the above proof to be ordinary: $\mathcal{N}^{\otimes \mu/p}_{D/X}$ yields a non-zero $p$-torsion of $\mathrm{Pic}^0\,D$, which cannot exist if $D$ is supersingular. \end{rema}

\begin{rema} A logarithmic version of the N\'eron--Ogg--Shafarevich criterion for good reduction of abelian varieties has been proven in \cite{BS}. It would be very interesting to have a full characterisation of torsors under abelian varieties with logarithmic good reduction. \end{rema}

\section{Appendix}
\begin{lemm} \label{lemm:root}
 Let $k$ be a perfect field of characteristic $p>0$, and let $Z$ be a normal integral $k$-scheme.
 Then the sequence of sheaves of abelian groups
 \[
  \begin{tikzcd}
   0\ar[r]&\mathcal{O}_Z\ar[r,"p"]&\mathcal{O}_Z\ar[r,"d"]&\Omega_{Z/k}^1
  \end{tikzcd}
 \]
 is exact, where $p$ is the power-to-$p$ homomorphism, and $d$ is the differential operator.
\end{lemm}
\begin{proof}
 For a $k$-algebra $A$, we denote by $p_A\colon A\to A$ the power-to-$p$ homomorphism and by $d_A\colon A\to\Omega_{A/k}$ the differential operator.
 Let $R$ be a normal integral $k$-algebra with fraction field $K$.
 Then $\mathrm{Im}\,p_R\subset\mathrm{Ker}\,d_R$.
 Thus, we have only to show that $\mathrm{Ker}\,d_R\subset\mathrm{Im}\,p_R$.
 The equality $d_R\otimes_RK=d_K$ implies that $\mathrm{Ker}\,d_R\subset\mathrm{Ker}\,d_K\cap R$.
 Since $R$ is normal, the equality $\mathrm{Im}\,p_R=\mathrm{Im}\,p_K\cap R$ holds.
 Since $k$ is perfect, the equality $\mathrm{Ker}\,d_K=\mathrm{Im}\,p_K$ holds \cite[Ch.\ 0, 21.4.6]{EGA4-1}, which concludes that $\mathrm{Ker}\,d_R\subset\mathrm{Im}\,p_R$.
\end{proof}
\begin{lemm} \label{lemm:monoid}
 Let $P$ and $Q$ be finitely generated sharp integral monoids.
 We denote by $\{P_\lambda\}_{\lambda\in\Lambda}$ the set of facets of $P$ \cite[I.2.3.7.(1)]{Og}.
 Put $\Gamma=\{(\lambda,\mu)\in\Lambda^2\mid\lambda\not=\mu\}$ and $P_{\lambda\mu}=P_\lambda\cap P_\mu$ for each $(\lambda,\mu)\in\Gamma$.
 We consider one of the following cases.
 \begin{itemize}
  \item[(1)]
  Let $\kappa$ be a field. Put $A=\kappa[[P,Q]]$.
  \item[(2)]
  Let $W$ be a complete valuation ring with residue field $\kappa$ of characteristic $p$ in which $p$ is a uniformiser.
  Choose $\theta\in W[[P,Q]]$ such that $\theta\equiv p\bmod(P\setminus\{1\},Q\setminus\{1\})$.
  Put $A=W[[P,Q]]/(\theta)$.
 \end{itemize}
 Put $A_\lambda=A/(P\setminus P_\lambda)$ for each $\lambda\in\Lambda$ and $A_{\lambda\mu}=A/(P\setminus P_{\lambda\mu})$ for each $(\lambda,\mu)\in\Gamma$.
 Then the sequence of $A$-modules
 \[
  \begin{tikzcd}
   A\ar[r]&\prod_{\lambda\in\Lambda}A_\lambda\ar[r]&\prod_{(\lambda,\mu)\in\Gamma}A_{\lambda\mu}
  \end{tikzcd}
 \]
 is exact, and the kernel of the first arrow is generated by $P\setminus\bigcap_{(\lambda,\mu)\in\Gamma}P_{\lambda\mu}$.
\end{lemm}
\begin{proof}
 In Case (1), we put $\kappa_0=\kappa$.
 In Case (2), we choose a complete system of representatives $\kappa_0\subset W$ of $\kappa$ such that $0\in\kappa_0$.
 Put $B=\{(a_{\alpha\beta}\in\kappa_0)_{(\alpha,\beta)\in P\times Q}\}$, $B_\lambda=\{(a_{\alpha\beta}\in\kappa_0)_{(\alpha,\beta)\in P_\lambda\times Q}\}$ for each $\lambda\in\Lambda$, and $B_{\lambda\mu}=\{(a_{\alpha\beta}\in\kappa_0)_{(\alpha,\beta)\in P_{\lambda\mu}\times Q}\}$ for each $(\lambda,\mu)\in\Gamma$.
 We give structures of pointed sets by equipping $A$, $A_\lambda$, $A_{\lambda\mu}$, $B$, $B_\lambda$ and $B_{\lambda\mu}$ with $0$ as base points.
 We define morphisms $\phi\colon B\to A$, $\phi_\lambda\colon B_\lambda\to A_\lambda$ and $\phi_{\lambda\mu}\colon B_{\lambda\mu}\to A_{\lambda\mu}$ of pointed sets by associating $(a_{\alpha\beta})$ with $\sum_{(\alpha,\beta)\in P\times Q}a_{\alpha\beta}\alpha\beta$ in $A$, $A_\lambda$ and $A_{\lambda\mu}$, respectively.
 Then all $\phi$, $\phi_\lambda$ and $\phi_{\lambda\mu}$ are bijective, and any square in the diagram of pointed sets
 \[
  \begin{tikzcd}
   B\ar[r]\ar[d,"\phi","\cong"']&\prod_{\lambda\in\Lambda}B_\lambda\ar[r]\ar[d,"(\phi_\lambda)_{\lambda\in\Lambda}","\cong"']&\prod_{(\lambda,\mu)\in\Gamma}B_{\lambda\mu}\ar[d,"(\phi_{\lambda\mu})_{(\lambda,\mu)\in\Gamma}","\cong"']\\
   A\ar[r]&\prod_{\lambda\in\Lambda}A_\lambda\ar[r]&\prod_{(\lambda,\mu)\in\Gamma}A_{\lambda\mu}
  \end{tikzcd}
 \]
 is commutative, where all horizontal arrow are the canonical projections.
 Since the upper sequence is exact, the lower sequence is exact, which concludes the proof.
\end{proof}
We apply the above lemma in the case where $P$ is saturated, and $Q=\mathbf{N}^r$.
By \cite[3.2]{Kato}, we obtain the following:
\begin{lemm} \label{lemm:paste}
 Let $(\mathcal{X},\mathcal{M}_{\mathcal{X}})$ be a log regular integral log scheme with fan $F$.
 We denote by $(Z_\lambda)_{\lambda\in\Lambda}$ the set of prime divisors on $\mathcal{X}$ corresponding to points on $F$ of codimension $1$.
 Put $\Gamma=\{(\lambda,\mu)\in\Lambda^2\mid\lambda\not=\mu\}$.
 We define reduced schemes by $Y=\bigcup_{\lambda\in\Lambda}Z_\lambda$ and $Z_{\lambda\mu}=Z_\lambda\cap Z_\mu$ for each $(\lambda,\mu)\in\Gamma$.
 Then the sequence of $\mathcal{O}_Y$-modules
 \[
  \begin{tikzcd}
   0\ar[r]&\mathcal{O}_Y\ar[r]&\prod_{\lambda\in\Lambda}(i_\lambda)_*\mathcal{O}_{Z_\lambda}\ar[r]&\prod_{(\lambda,\mu)\in\Gamma}(i_{\lambda\mu})_*\mathcal{O}_{Z_{\lambda\mu}}
  \end{tikzcd}
 \]
 is exact, where $i_\lambda\colon Z_\lambda\to Y$ and $i_{\lambda\mu}\colon Z_{\lambda\mu}\to Y$ are the closed immersions, and the arrows are induced by the closed immersions $Z_{\lambda\mu}\to Z_\lambda\to Y$ for all $(\lambda,\mu)\in\Gamma$.
\end{lemm}
All $Z_\lambda$ and $Z_{\lambda\mu}$ in the above lemma are normal.
Thus, Lemma \ref{lemm:root} shows the following:
\begin{lemm} \label{lemm:exact}
 We use the notation introduced in Lemma \ref{lemm:paste}.
 Let $k$ be a perfect field of characteristic $p>0$.
 Assume that $Y$ is a $k$-scheme.
 Then the sequence of sheaves of abelian groups
 \[
  \begin{tikzcd}
   0\ar[r]&\mathcal{O}_Y\ar[r,"p"]&\mathcal{O}_Y\ar[r,"d"]&\Omega_{Y/k}^1
  \end{tikzcd}
 \]
 is exact, where $p$ is the power-to-$p$ homomorphism, and $d$ is the differential operator.
 In particular, the sequence of sheaves of abelian groups
 \[
  \begin{tikzcd}
   1\ar[r]&\mathcal{O}_Y^\times\ar[r,"p"]&\mathcal{O}_Y^\times\ar[r,"\mathrm{dlog}"]&\mathrm{dlog}\,\mathcal{O}_Y^\times\ar[r]&0
  \end{tikzcd}
 \]
 is exact, where $\mathrm{dlog}\colon\mathcal{O}_Y^\times\to\Omega_{Y/k}^1$ is the log differential operator.
 Suppose that $H^0(Y,\mathcal{O}_Y^\times)$ is $p$-dividible, e.g.\ $H^0(Y,\mathcal{O}_Y)=k$.
 Then the connecting homomorphism induced by the above exact sequence gives an isomorphism $\psi\colon H^0(Y,\mathrm{dlog}\,\mathcal{O}_Y^\times)\cong(\mathrm{Pic}\, Y)[p]$.
\end{lemm}
\begin{rema} \label{rema:explicit}
 Let us give an explicit definition of $\psi$.
 Take $\omega\in H^0(Y,\mathrm{dlog}\,\mathcal{O}_Y^\times)$.
 We may take an open covering $(Y_i)_{i\in I}$ of $Y$ and $(u_i\in H^0(Y_i,\mathcal{O}_Y^\times))_{i\in I}$ such that $\omega|_{Y_i}=\mathrm{dlog}\,u_i$ for all $i\in I$.
 For each $(i,j)\in I^2$, we put $Y_{ij}=Y_i\cap Y_j$ and $u_{ij}=(u_i|_{Y_{ij}})(u_j|_{Y_{ij}})^{-1}\in H^0(Y_{ij},\mathcal{O}_Y^\times)$.
 Take $w_{ij}\in H^0(Y_{ij},\mathcal{O}_Y^\times)$ so that $u_{ij}=w_{ij}^p$.
 Then $\psi(\omega)$ is defined as the line bundle on $Y$ whose transition functions are given by $(w_{ij})_{(i,j)\in I^2}$.
\end{rema}
We apply the above lemma in Proposition \ref{prop:normalbundle}.



\subsection*{Acknowledgements} We gratefully acknowledge the support of FWO Vlaanderen\footnote{Financial support through a personal fellowship.}, JSPS\footnote{JSPS KAKENHI Grant Numbers JP25800018, JP24224001, JP17K14167, JP17H02832, JP17H06127, 17H02835, the JSPS Program for Advancing Strategic International Networks to Accelerate the Circulation of Talented Researchers based on OCAMI (Osaka City University Advanced Mathematical Institute), the Japan-France Research Cooperative Program.}, CNRS\footnote{The Japan-France Research Cooperative Program.}, the Max Planck Institute for Mathematics for its hospitality and the European Research Council\footnote{ERC Starting Grant MOTZETA (project 306610).}. We are also very grateful to Giulia Battiston, Ben Moonen, Johannes Nicaise, Takeshi Saito and Jakob Stix for useful discussions on this note. The special case of Theorem \ref{genus1theorem} where $p\mid m$ and the Jacobian of $C$ has good reduction has been covered independently by R\'emi Lodh. We thank him for making us aware of his results and for subsequent discussions.

\normalsize

\normalsize

\end{document}